\theoremstyle{plain}
\newtheorem{thm}{Theorem}
\newtheorem{cor}{Corollary}
\newtheorem{lem}{Lemma}
\newtheorem{prop}{Proposition}
\newcommand{\C}{\mathbb{C}}
\newcommand{\Z}{\mathbb{Z}}
\newcommand{\N}{\mathbb{N}}
\newcommand{\PP}{\mathbb{P}}
\newcommand{\OO}{\mathcal{O}}
\DeclareMathOperator{\GL}{GL}
\DeclareMathOperator{\tr}{tr}
\DeclareMathOperator{\Hom}{Hom}
\DeclareMathOperator{\Ker}{Ker}
\DeclareMathOperator{\Ima}{Im}
\DeclareMathOperator{\End}{End}
\DeclareMathOperator{\Rep}{Rep}
\DeclareMathOperator{\rank}{rank}
\DeclareMathOperator{\Res}{Res}
\begin{document}
\title[Monodromy for systems of vector bundles]{Monodromy for systems of vector bundles and multiplicative preprojective algebras}

\author{William Crawley-Boevey}
\address{Department of Pure Mathematics, University of Leeds, Leeds LS2 9JT, UK}
\email{w.crawley-boevey@leeds.ac.uk}

\thanks{Mathematics Subject Classification (2000): Primary 16G20, 34M35}

\begin{abstract}
We study systems involving vector bundles and logarithmic connections on Riemann surfaces
and linear algebra data linking their residues. This generalizes representations of
deformed preprojective algebras. Our main result is the
existence of a monodromy functor from such systems to
representations of a multiplicative preprojective algebra.
As a corollary, we prove that the multiplicative preprojective algebra
associated to a Dynkin quiver is isomorphic to the usual preprojective algebra.
\end{abstract}
\maketitle

\section{Introduction}
In this paper we consider systems of vector bundles and logarithmic connections on
Riemann surfaces, combined with a certain linking between them which involves
the residues of the connections. The configuration is described by specifying
both a quiver and a Riemann surface with finitely many connected components.

Our main result is the existence of a monodromy functor which, in case the 
Riemann surface is compact, gives an equivalence between a category of such 
systems and a category of representations of a multiplicative preprojective
algebra, in the sense of \cite{CBS}.

In previous work (see \cite{CBicm} for a survey) we used deformed preprojective algebras,
multiplicative preprojective algebras and monodromy to study questions about the
existence of tuples of square matrices in prescribed conjugacy classes whose sum is zero or
whose product is the identity. In that work the quiver was taken to be star-shaped,
and the restriction of a representation to each arm of the star fixed one of the matrices.
This paper generalizes some of that work, and provides a more natural setting,
as it enables one apply monodromy in a quiver setting, without having to pass to and 
from tuples of matrices.

As an application, we prove that the multiplicative preprojective algebra $\Lambda^1(Q)$
associated to a Dynkin quiver is isomorphic to the usual preprojective algebra $\Pi(Q)$,
answering a question of Shaw \cite{Shaw}.
We also give an interpretation of this as a special case of Hilbert's 21st problem.

The structure of the paper is as follows. In section~\ref{s:vbr} we introduce the notion 
of a `Riemann surface quiver', which combines a quiver with a Riemann surface, 
and we also introduce `vector bundle representations'. In section~\ref{s:conn} 
we introduce the idea of a `$\lambda$-connection system' on a vector bundle representation, 
combining a logarithmic connection with linear algebra data linking its residues. 
In section~\ref{s:lift} we prove a `Lifting Theorem', which gives
a criterion for the existence of $\lambda$-connection systems, 
and in section~\ref{s:monodromy} we discuss the `Monodromy Theorem', 
giving the existence of a monodromy functor;
the proof uses two lemmas, which are given in sections~\ref{s:cqe} and \ref{s:lcd}.
In section~\ref{s:mpa} we identify the target of the monodromy functor in the compact case 
with a category of representations of a multiplicative preprojective algebra.
Finally, we address Shaw's question and Hilbert's 21st problem in section~\ref{s:cor}.

I would like to thank R.~Bielawski and T.~Hausel for suggesting that the constructions
in this paper might be considered in the context of torsion-free sheaves on a singular curve.
I hope to return to this in another paper.

\section{Vector bundle representations}
\label{s:vbr}
By a \emph{Riemann surface quiver} $\Gamma$ we mean a quiver whose set of vertices has the
structure of a Riemann surface $X$, not necessarily connected, but with only finitely many connected components.
We assume in addition that $\Gamma$ has only finitely many arrows.
We say that $\Gamma$ is \emph{compact} if $X$ is compact, and that $\Gamma$ is of \emph{$\PP^1$-type}
if each connected component is a copy of the Riemann sphere $\PP^1$.

By a \emph{marked} point, we mean a point of the Riemann surface which occurs as a head or tail of an arrow.
We say that $\Gamma$ has \emph{non-interfering arrows} provided that each
point of the Riemann surface occurs at most once as the head or tail of an arrow,
so that if there are $n$ arrows, then there are exactly $2n$ marked points.

Let $X_i$ ($i\in I$) be the connected components of $X$, for a suitable finite indexing set $I$.
Given $p\in X$ we denote by $[p]$ the index of the connected component containing $p$.
We define the \emph{component quiver} $[\Gamma]$ of $\Gamma$ to be the quiver
with vertex set $I$ and with an arrow $[a]:[p]\to [q]$ for each arrow $a:p\to q$ in $\Gamma$.

By a \emph{vector bundle representation} of a Riemann surface quiver $\Gamma$
we mean a representation of the quiver such that the vector spaces at each vertex
form the fibres of a (holomorphic) vector bundle.
Equivalently it is a collection $\mathbf{E} = (E,E_a)$ consisting of a vector bundle $E$ over $X$
and linear maps  $E_a : E_p\to E_q$ for each arrow $a:p\to q$ in $\Gamma$. Here $E_p$ denotes the fibre of $E$ at $p$.
Note that specifying the vector bundle $E$ over $X$ is equivalent to specifying
a vector bundle $E_i$ on each $X_i$, and so we can also consider a vector bundle system
as a collection $(E_i,E_a)$.
We do not require the $E_i$ to all have the same rank, and define the \emph{dimension vector}
of $\mathbf{E}$ to be the vector in $\N^I$ whose $i$th component is $\rank E_i$.
If $\Gamma$ is compact, the \emph{degree} of $\mathbf{E}$ is $\deg E = \sum_{i\in I} \deg E_i$.

There is a natural category $\mathcal{V}_\Gamma$ of vector bundle representations of $\Gamma$, in which
a homomorphism $\theta$ from $\mathbf{E}=(E,E_a)$ to $\mathbf{E}'=(E',E'_a)$
is a vector bundle homomorphism $\theta:E\to E'$
(or equivalently a collection of vector bundle homomorphisms $\theta_i:E_i\to E'_i$)
with the property that $\theta_q E_a = E'_a \theta_p$ for each arrow $a:p\to q$.
This is clearly an additive category over $\C$ with split idempotents. If $\Gamma$ is
compact it has finite-dimensional homomorphism spaces.

We say that a vector bundle representation $\mathbf{E}$ is \emph{vb-trivial}
if the bundles $E_i$ are trivial vector bundles (of some rank) for all $i$.
If $\Gamma$ is compact then the category of trivial vector bundles
on $X_i$ is equivalent to the category of vector spaces,
and hence the category of vb-trivial vector bundle representations
of $\Gamma$ is equivalent to the category of representations of the component quiver.

\section{Connections}
\label{s:conn}
Henceforth we fix a Riemann surface quiver $\Gamma$ with underlying Riemann surface $X$.
Let $D$ be the set of marked points of $X$.
We fix a collection of scalars $\lambda = (\lambda_p)_{p\in D} \in \C^D$.
Let $D_i = D\cap X_i$, and define $\lambda_i = \sum_{p\in D_i} \lambda_p$ for $i\in I$.

By a \emph{$\lambda$-connection system} on
a vector bundle representation $\mathbf{E}$ of $\Gamma$,
we mean a collection $\pmb{\nabla} = (\nabla,\nabla_a)$
consisting of a connection $\nabla : E \to \Omega^1_X(\log D)\otimes E$ on $E$,
holomorphic except possibly for logarithmic poles on $D$
(or equivalently a connection $\nabla_i$ on each $E_i$),
and linear maps $\nabla_a:E_q\to E_p$ for each arrow $a:p\to q$ in $\Gamma$,
satisfying
\[
\Res_p \nabla - \lambda_p 1_{E_p} = \sum_{t(a)=p} \nabla_a E_a - \sum_{h(a)=p} E_a \nabla_a
\]
for points $p\in D$.
If the Riemann surface quiver has non-interfering arrows we can rewrite this as
\[
\Res_p \nabla - \lambda_p 1_{E_p} = \nabla_a E_a
\quad\text{and}\quad
\Res_q \nabla - \lambda_q 1_{E_q} = - E_a \nabla_a
\]
for each arrow $a:p\to q$ in $\Gamma$.

We denote by $\mathcal{C}_{\Gamma,\lambda}$ the category whose objects are pairs
$(\mathbf{E},\pmb{\nabla})$,
consisting of a vector bundle representation $\mathbf{E}$ of $\Gamma$
and a $\lambda$-connection system $\pmb{\nabla}$ on $\mathbf{E}$,
and in which a morphism from $(\mathbf{E},\pmb{\nabla})$ to $(\mathbf{E}',\pmb{\nabla}')$
is by definition a morphism of vector bundle representations $\theta:\mathbf{E}\to \mathbf{E}'$
with the property that the square
\[
\begin{CD}
E @>\nabla>> \Omega^1_X(\log D)\otimes E \\
@V\theta VV @VV 1\otimes \theta V \\
E' @>\nabla'>> \Omega^1_X(\log D)\otimes E'
\end{CD}
\]
commutes and $\theta_p \nabla_a = \nabla'_a \theta_q$ for each arrow $a:p\to q$ in $\Gamma$.
Note the following interpretation in terms of deformed preprojective algebras \cite{CBH}.

\begin{prop}
\label{p:dpa}
If $\Gamma$ is of $\PP^1$-type, then
the full subcategory of $\mathcal{C}_{\Gamma,\lambda}$
given by pairs $(\mathbf{E},\pmb{\nabla})$ with $\mathbf{E}$ vb-trivial
is equivalent to the category of representations of $\Pi^\lambda([\Gamma])$,
the deformed preprojective algebra associated to the component quiver $[\Gamma]$ of $\Gamma$.
\end{prop}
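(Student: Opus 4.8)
The plan is to exploit the two special features of a trivial bundle on $\PP^1$: its fibres are canonically identified, and a logarithmic connection on it is completely determined by its residues. Fix a component $X_i\cong\PP^1$ with marked points $D_i$, and for a vb-trivial representation write $V_i=H^0(X_i,E_i)$, so that evaluation at a point gives a canonical isomorphism $V_i\to E_p$ for every $p\in X_i$. In this way all fibres over $X_i$ are identified with $V_i$, and the residues $\Res_p\nabla_i$ together with the maps $E_a,\nabla_a$ become ordinary linear maps between the spaces $V_i$. Under the equivalence of section~\ref{s:vbr} the assignment $\mathbf{E}\mapsto(V_i)$ with $E_a$ sent to a map $V_{[p]}\to V_{[q]}$ already realizes vb-trivial representations as representations of the component quiver $[\Gamma]$; the content of the proposition is that the connection data $\pmb{\nabla}$ corresponds exactly to the dual arrows and the deformed preprojective relation.

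First I would record the key $\PP^1$-fact. On the trivial bundle a logarithmic connection has the form $\nabla_i=d+\Omega$ with $\Omega\in H^0(X_i,\Omega^1_{X_i}(\log D_i))\otimes\End(V_i)$ and $\Res_p\nabla_i=\Res_p\Omega$. Since $H^0(\PP^1,\Omega^1_{\PP^1})=0$, the residue map $H^0(\Omega^1_{\PP^1}(\log D_i))\to\C^{D_i}$ is injective; by the residue theorem its image lies in the hyperplane $\{\sum_p c_p=0\}$, and a dimension count shows it is onto. Tensoring with $\End(V_i)$ gives a bijection between logarithmic connections $\nabla_i$ on $E_i$ and tuples $(A_p)_{p\in D_i}$ of endomorphisms of $V_i$, via $A_p=\Res_p\nabla_i$, subject to the single Fuchs relation $\sum_{p\in D_i}\Res_p\nabla_i=0$.

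Next I would set up the mutually inverse functors on objects. Given $(\mathbf{E},\pmb{\nabla})$, put $V_i$ at vertex $i$, assign $E_a$ to the arrow $[a]$ of $[\Gamma]$ and $\nabla_a$ to the opposite arrow $[a]^*$. Summing the defining relation of a $\lambda$-connection system over $p\in D_i$ and using the Fuchs relation, the residue terms cancel and, with $\lambda_i=\sum_{p\in D_i}\lambda_p$, one is left with exactly the deformed preprojective relation at vertex $i$ (in the conventions of \cite{CBH}),
\[
\sum_{h([a])=i} E_a\nabla_a-\sum_{t([a])=i}\nabla_a E_a=\lambda_i\,1_{V_i}.
\]
Conversely, given a representation of $\Pi^\lambda([\Gamma])$, take trivial bundles $E_i$ with fibre $V_i$, set $E_a=\phi_{[a]}$ and $\nabla_a=\phi_{[a]^*}$, and prescribe residues $A_p=\lambda_p 1_{V_i}+\sum_{t(a)=p}\nabla_a E_a-\sum_{h(a)=p}E_a\nabla_a$. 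The displayed relation is precisely what forces $\sum_{p\in D_i}A_p=0$, so by the key $\PP^1$-fact there is a unique logarithmic connection $\nabla_i$ with these residues, and $(\mathbf{E},\pmb{\nabla})$ is then a $\lambda$-connection system by construction. These assignments are visibly inverse on objects.

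Finally I would match morphisms. For trivial bundles a homomorphism $\theta$ is a tuple $\psi_i\colon V_i\to V_i'$, and the conditions $\theta_q E_a=E'_a\theta_p$ and $\theta_p\nabla_a=\nabla'_a\theta_q$ say exactly that $(\psi_i)$ intertwines the maps attached to the arrows $[a]$ and $[a]^*$. A short check shows these two conditions already force commutativity of the connection square: for constant $\theta$ the square reduces to $\theta_i A_p=A'_p\theta_i$, which is automatic since each $A_p$ is built from the $E_a$, $\nabla_a$ and scalars. Hence morphisms on the two sides coincide and $F$, $G$ give the desired equivalence. The one genuinely delicate step is the key $\PP^1$-fact, and this is exactly where the $\PP^1$-type hypothesis is used: in higher genus $H^0(\Omega^1_{X_i})\neq 0$, so residues no longer determine the connection and the clean correspondence with $\Pi^\lambda([\Gamma])$ fails.
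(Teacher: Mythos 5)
Your proof is correct and follows essentially the same route as the paper: identify all fibres of the trivial bundles, reduce a logarithmic connection on a trivial bundle over $\PP^1$ to its residues subject to the single relation $\sum_{p\in D_i}\Res_p\nabla_i=0$, and observe that the linking relation turns this constraint into the deformed preprojective relation at each vertex. The only differences are cosmetic: the paper quotes the explicit partial-fraction form $A_i(z)=\sum_{p\in D_i}R_p/(z-p)$ from Anosov--Bolibruch, whereas you derive the same residue bijection cohomologically from $H^0(\PP^1,\Omega^1_{\PP^1})=0$ plus the residue theorem, and you spell out the morphism check that the paper leaves implicit.
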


\begin{proof}
As the $E_i$ are trivial vector bundles, all fibres $E_p$ ($p\in X_i$) can all be identified,
say with vector space $V_i$, and then these spaces together with the linear maps $E_a$ and $\nabla_a$
define a representation of the double quiver of $[\Gamma]$.

Any logarithmic connection on $E_i$ has the form
\[
\nabla_i(f) = df + A_i(z) \cdot f \, dz
\]
for $f$ is a section of $E_i$, where $A_i(z)$ is an $\End(V_i)$-valued function of $z$, holomorphic
except for simple poles on $D_i$.
If $z$ is a coordinate for $X_i$ with $\infty\notin D_i$, then $A_i(z)$ must have the form
\[
A_i(z) = \sum_{p\in D_i} \frac{R_p}{z-p}
\]
where the residues $R_p = \Res_p \nabla_i \in \End(V_i)$
are any endomorphisms, subject to the relation $\sum_{p\in D_i} R_p = 0$;
see for example \cite[\S 1.2]{AB}.
Using the linking of residues, this relation can be rewritten as
\[
\sum_{h(a)\in X_i} E_a \nabla_a - \sum_{t(a)\in X_i} \nabla_a E_a = \lambda_i 1_{V_i}.
\]
This is the relation at vertex $i$ for the deformed preprojective algebra. The assertion follows.
\end{proof}

\section{Lifting Theorem}
\label{s:lift}
The following is a straightforward analogue of \cite[Theorem 3.3]{CBmoment} and \cite[Theorem 7.1]{CBipb}.

\begin{thm}
\label{t:lift}
If $\mathbf{E}$ is a vector bundle representation of a compact Riemann surface quiver $\Gamma$, then
there exists a $\lambda$-connection system on $\mathbf{E}$ if and only if
\[
\deg E' + \sum_{i\in I} \lambda_i \rank E'_i = 0
\]
for every direct summand $\mathbf{E}'$ of $\mathbf{E}$.
\end{thm}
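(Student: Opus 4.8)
The plan is to convert the existence of a $\lambda$-connection system into a single linear-algebra identity and then read off both directions from it. Write $\mathcal{L}$ for the set of logarithmic connections on $E$ with poles in $D$; it is a torsor under $H^0(\Omega^1_X(\log D)\otimes\End E)$, and it is non-empty in every case we need (a system is given in the necessity direction, while for sufficiency we reduce below to indecomposable $\mathbf{E}$, where each component in the support carries a marked point, or else $\mathbf{E}$ is a single bundle of degree $0$ covered by the theorem of Atiyah and Weil). Fixing $\nabla_0\in\mathcal{L}$, I regard a candidate system as a point of $\mathcal{L}\times\bigoplus_a\Hom(E_q,E_p)$, and the defining relations say exactly that the affine map
\[
\Phi(\nabla,(\nabla_a))=\Bigl(\Res_p\nabla-\lambda_p 1_{E_p}-\sum_{t(a)=p}\nabla_a E_a+\sum_{h(a)=p}E_a\nabla_a\Bigr)_{p\in D}\in\bigoplus_{p\in D}\End(E_p)
\]
vanishes. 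Thus a $\lambda$-connection system exists if and only if $0\in\Ima\Phi$.

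Since $\Phi$ is affine, $\Ima\Phi=\Phi(\nabla_0,0)+W$, where $W$ is the sum of the image of the residue map on $H^0(\Omega^1_X(\log D)\otimes\End E)$ (obtained by varying $\nabla$) and the image of the linear map $(\nabla_a)\mapsto(\sum_{t(a)=p}\nabla_a E_a-\sum_{h(a)=p}E_a\nabla_a)_p$. Using the nondegenerate trace pairing $\langle(R_p),(S_p)\rangle=\sum_p\tr(R_pS_p)$ on $\bigoplus_p\End(E_p)$, existence becomes the condition $\Phi(\nabla_0,0)\perp W^\perp$. I then compute $W^\perp$. By the residue theorem for logarithmic $1$-forms, any fibre tuple $(\phi_p)$ arising from a global endomorphism $\phi\in H^0(\End E)$ pairs to zero with all residues of global forms, and Serre duality shows these exhaust the perp of that first subspace; the perp of the arrow-term image consists of tuples $(S_p)$ with $E_aS_{t(a)}=S_{h(a)}E_a$ for every arrow. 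The intersection is precisely the set of fibre restrictions of endomorphisms of the vector bundle representation, so a system exists if and only if
\[
\chi(\phi):=\sum_{p\in D}\tr(\phi_p\,\Res_p\nabla_0)=\sum_{p\in D}\lambda_p\tr(\phi_p)\quad\text{for all }\phi\in\End_{\mathcal{V}_\Gamma}(\mathbf{E}).
\]
A further application of the residue theorem shows $\chi$ is independent of the choice of $\nabla_0$, and that $\chi(e)=-\deg(\Ima e)$ for an idempotent $e$, computed via the logarithmic connection that $\nabla_0$ induces on the subbundle $\Ima e$.

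The necessity of the degree condition is now immediate: an idempotent $e\in\End_{\mathcal{V}_\Gamma}(\mathbf{E})$ cutting out a direct summand $\mathbf{E}'$ gives $\tr(e_p)=\rank E'_{[p]}$, so the displayed identity reads $-\deg E'=\sum_{i\in I}\lambda_i\rank E'_i$, which is the asserted equality. For sufficiency I reduce to indecomposable $\mathbf{E}$: the category $\mathcal{V}_\Gamma$ is Krull--Schmidt, a direct sum of systems is a system, and the hypothesis applies to each indecomposable summand. When $\mathbf{E}$ is indecomposable, $\End_{\mathcal{V}_\Gamma}(\mathbf{E})$ is a local $\C$-algebra, hence $\C\cdot 1\oplus N$ with $N$ its nilpotent radical; as the displayed identity is linear in $\phi$, it suffices to verify it on $1$ and on $N$. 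On $\phi=1$ it reduces, by $\sum_p\tr\Res_p\nabla_0=-\deg E$, to $\deg E+\sum_i\lambda_i\rank E_i=0$, which is exactly the hypothesis applied to the summand $\mathbf{E}'=\mathbf{E}$. For $\phi\in N$ both sides vanish: the fibres $\phi_p$ are nilpotent, so $\tr(\phi_p)=0$, and since $\chi$ is independent of the connection I may compute it using a logarithmic connection compatible with the kernel filtration of $\phi$ on each component, making $\phi_p\Res_p\nabla_0$ strictly triangular.

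The crux is the interplay in the third paragraph: indecomposability forces $\End_{\mathcal{V}_\Gamma}(\mathbf{E})$ to be local, which collapses the infinitely many conditions in the displayed identity down to the single scalar case $\phi=1$, and that case reproduces the hypothesis verbatim. The main obstacles are the two cohomological inputs. First, the Serre-duality identification of $W^\perp$ with fibre restrictions of $H^0(\End E)$, equivalently the statement that the residue map onto $\bigoplus_p\End(E_p)$ has image exactly the annihilator of the global endomorphisms; this is where compactness of $X$ enters decisively. Second, the existence of enough logarithmic connections, both the initial $\nabla_0$ and the filtration-compatible one used to kill $\chi$ on nilpotent $\phi$, which relies on the non-emptiness of each $D_i$ in the relevant components (or on Atiyah--Weil in the degenerate isolated case). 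I expect the Serre-duality step to be the part requiring the most care, since it must be set up componentwise and then cut down by the arrow relations.
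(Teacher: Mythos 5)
Your proof follows the same architecture as the paper's: dualize the residue equations against the trace pairing so that existence becomes the vanishing of a linear functional on $\End_{\mathcal{V}_\Gamma}(\mathbf{E})$, then use Krull--Schmidt to reduce to indecomposable $\mathbf{E}$, where locality of the endomorphism algebra splits the verification into the identity (which reproduces the degree hypothesis) and the nilpotent radical. The paper does exactly this, but where you re-derive the existence criterion by hand via Serre duality, it invokes Mihai's criterion (via \cite[Theorem 7.2]{CBipb}), and that citation carries the one fact your write-up does not actually establish: the functional vanishes on \emph{nilpotent} endomorphisms.

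That is where your proof has a genuine gap. You dispose of $\chi(\phi)=0$ for nilpotent $\phi$ by asserting that you ``may compute it using a logarithmic connection compatible with the kernel filtration of $\phi$.'' The existence of a filtration-compatible logarithmic connection is neither proved nor automatic, and for a general filtration it is false: on $\PP^1$ with $D=\{0\}$ the space $H^0(\Omega^1_{\PP^1}(\log 0)\otimes\End(\OO^2))$ is zero, so the trivial connection $d$ is the \emph{only} logarithmic connection on $\OO^2$ with poles in $D$, and $d$ does not preserve the line subbundle $\OO(-1)\subset\OO^2$ embedded by $z\mapsto\C\cdot(1,z)$. So any correct argument must exploit that your filtration is the kernel filtration of a nilpotent endomorphism, and proving that is essentially the Weil--Atiyah--Mihai lemma you are trying to bypass --- it is the crux, not a routine choice of $\nabla_0$. (A repair inside your framework: factor $\phi$ through the quotient $Q=E/K$ with $K$ the saturated kernel, write $\phi=\tilde\phi\circ\pi$, compare $\pi\nabla_0$ with $\nabla_Q\pi$ for a logarithmic connection $\nabla_Q$ on $Q$; the discrepancy is $\OO$-linear, so its trace against $\tilde\phi$ is a global logarithmic form with residue sum zero, giving $\chi_E(\phi)=\chi_Q(\bar\phi)$, and one inducts on the nilpotency index.) The same issue infects your unproved claim that $\mathcal{L}\neq\emptyset$ whenever each component of the support carries a marked point: the obstruction lies in $H^1(\Omega^1_X(\log D)\otimes\End E)\cong H^0(\End E(-D))^*$, and global endomorphisms vanishing on a nonempty $D$ are nilpotent (their eigenvalues are constant on a compact connected component), so killing the pairing again requires exactly the nilpotent-vanishing lemma. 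As written, both pillars of your sufficiency direction rest on this one unestablished statement.
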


\begin{proof}
We identify $f\in \End(E)$ with $(f_i)\in \bigoplus_{i\in I}\End(E_i)$.
Since $X_i$ is compact, there is a trace mapping $\tr_i:\End(E)\to \C$ with $\tr(f) = \tr(f_p)$ for all $p\in X_i$.

A criterion of Mihai \cite{Mihai2} for the existence of logarithmic connections, see \cite[Theorem 7.2]{CBipb},
implies that there is a $\lambda$-connection system on $\mathbf{E}$
if and only if there are linear maps $\nabla_a:E_q\to E_p$ for all arrows $a:p\to q$ in $\Gamma$ satisfying
\[
\sum_{p\in D} \tr\biggl( (\lambda_p 1 + \sum_{t(a)=p} \nabla_a E_a - \sum_{h(a)=p} E_a \nabla_a) f_p \biggr)
= \frac{1}{2\pi\sqrt{-1}} \; \langle b(E),f\rangle
\]
for all $f\in \End(E)$, where $\langle b(E),f\rangle = \sum_{i\in I} \langle b(E_i),f_i\rangle$.

There is an exact sequence
\[
0\to \End(\mathbf{E}) \to \End(E) \to \bigoplus_{a:p\to q} \Hom(E_p,E_q)
\]
where the second map sends $f$ to the tuple whose $a$th component is $E_a f_p - f_q E_a$.
Using the trace pairing it dualizes to give an exact sequence
\[
\bigoplus_{a:p\to q} \Hom(E_q,E_p) \xrightarrow{F} \End(E)^* \xrightarrow{G} \End(\mathbf{E})^* \to 0
\]
where $F$ sends a tuple $(\nabla_a)$ to the linear form sending $f$ to
\[
\sum_{a:p\to q} \tr \biggl( (E_a f_p - f_q E_a)\nabla_a \biggr)
= \sum_{p\in D} \tr \biggl( (\sum_{t(a)=p}\nabla_a E_a - \sum_{h(a)=p} E_a\nabla_a) \; f_p \biggr).
\]
If we define $\xi\in\End(E)^*$ by
\[
\xi(f) = \frac{1}{2\pi\sqrt{-1}} \; \langle b(E),f\rangle - \sum_{i\in I}\sum_{p\in D_i} \lambda_i \tr_i(f)
\]
then the condition for there to be a $\lambda$-connection system can be written as $\xi\in\Ima(F)$,
so as $\xi\in\Ker(G)$, so as $\xi(f) = 0$ for all $f\in \End(\mathbf{E})$.
Now if $f=p_{\mathbf{E}'}$, the projection onto a direct summand $\mathbf{E}'$ of $\mathbf{E}$, then
\[
\xi(p_{\mathbf{E}'}) = \sum_{i\in I} \left( - \deg E'_i - \sum_{p\in X_i} \lambda_i \rank E'_i \right),
\]
so the existence of a $\lambda$-connection system implies the condition in the statement of the Theorem.
Conversely, since $\xi(f) = 0$ for $f$ nilpotent, the stated condition implies the existence of
a $\lambda$-connection system on any indecomposable direct summand of $\mathbf{E}$, and by combining these
one obtains a $\lambda$-connection system on $\mathbf{E}$.
\end{proof}

\section{Monodromy Theorem}
\label{s:monodromy}
Let $\Gamma$ be a Riemann surface quiver with non-interfering arrows, and let $\lambda\in \C^D$, as before.
We fix a subset $T$ of $\C$.
We assume that $T$ is \emph{non-resonant},
by which we mean that distinct elements of $T$ never differ by an integer,
and we assume moreover that $0\in T$.


We say that a $\lambda$-connection system $\pmb{\nabla}$ \emph{has eigenvalues in $T$} provided that
$E_a \nabla_a$ has eigenvalues in $T$ for all arrows $a$ in $\Gamma$.
(Since $0\in T$, it is equivalent that $\nabla_a E_a$ has eigenvalues in $T$.)
We denote by $\mathcal{C}^T_{\Gamma,\lambda}$ the full subcategory of $\mathcal{C}_{\Gamma,\lambda}$
given by the pairs $(\mathbf{E},\pmb{\nabla})$ such that $\pmb{\nabla}$ has eigenvalues in $T$.

For each $i\in I$, fix a base point $b_i$ for $X_i \setminus D_i$, and for each $p\in D_i$
fix a loop $\ell_p \in \pi_1(X_i\setminus D_i,b_i)$ around $p$.
Define $\sigma \in (\C^\times)^D$ by $\sigma_p = e^{2\pi \sqrt{-1}\; \lambda_p}$ for $p\in D$.

We denote by $\Rep_\sigma \pi(\Gamma)$ the category whose
objects are given by collections $(V_i,\rho_i,\rho_a,\rho_a^*)$ consisting of
a representation $\rho_i(\pi_1(X_i\setminus D_i,b_i))\to \GL(V_i)$
for each $i\in I$,
and linear maps $\rho_a:V_i\to V_j$ and $\rho_a^*:V_j\to V_i$
for each arrow $a:p\to q$ in $\Gamma$, where $i = [p]$ and $j = [q]$,
satisfying
\[
\sigma_p^{-1} \rho_i(\ell_p)^{-1} =  1_{V_i} + \rho_a^* \rho_a
\quad\text{and}\quad
\sigma_q \rho_j(\ell_q) = 1_{V_j} + \rho_a \rho_a^*
\]
and whose morphisms are the natural ones.

Let $S = \{ e^{2\pi \sqrt{-1} \; t} - 1 : t\in T\}$.
We denote by $\Rep_\sigma^S \pi(\Gamma)$ the full subcategory
of $\Rep_\sigma \pi(\Gamma)$ consisting of the collections
in which $\rho_a \rho_a^*$ has eigenvalues in $S$ for all arrows $a$.
(Since $0\in S$, it is equivalent that $\rho_a^* \rho_a$ has eigenvalues in $S$.)

\begin{thm}
\label{t:monthm}
There is an equivalence of categories
$\mathcal{C}^T_{\Gamma,\lambda}\to \Rep_\sigma^S \pi(\Gamma)$.
\end{thm}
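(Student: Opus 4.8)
**The plan is to construct the equivalence by applying monodromy component-by-component, then checking that the defining relations of the two categories correspond under the Riemann–Hilbert correspondence.**

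On each connected component $X_i$, a $\lambda$-connection system restricts to a logarithmic connection $\nabla_i$ on the vector bundle $E_i$ over $X_i$ with singularities along $D_i$. The classical Riemann–Hilbert correspondence gives an equivalence between such logarithmic connections (with non-resonant residues) and representations of the fundamental group $\pi_1(X_i\setminus D_i, b_i)$ on the fibre $V_i = E_{b_i}$. First I would set $V_i$ to be the space of flat sections of $\nabla_i$ near $b_i$ (or equivalently the fibre, identified via the frame of flat multivalued sections), and define $\rho_i$ to be the monodromy representation sending each loop $\ell_p$ to the monodromy around $p$. The key local computation is that the monodromy of a logarithmic connection around $p$ with residue $R_p$ is conjugate to $\exp(-2\pi\sqrt{-1}\,R_p)$; the non-resonance hypothesis on $T$ (together with $0\in T$) guarantees this exponential map is injective on the relevant eigenvalues and hence the correspondence is a genuine equivalence on morphisms as well as objects.

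The heart of the argument is translating the linking relations. For an arrow $a:p\to q$, the defining equations $\Res_p\nabla - \lambda_p 1_{E_p} = \nabla_a E_a$ and $\Res_q\nabla - \lambda_q 1_{E_q} = -E_a\nabla_a$ express the residues at the marked points in terms of the linear data $\nabla_a, E_a$. Under monodromy, exponentiating the residue at $p$ gives
\[
\rho_i(\ell_p)^{-1} = \exp(2\pi\sqrt{-1}\,\Res_p\nabla) = \sigma_p\,\exp(2\pi\sqrt{-1}\,\nabla_a E_a),
\]
using $\sigma_p = e^{2\pi\sqrt{-1}\lambda_p}$ and that $\lambda_p 1$ commutes with everything. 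I would then set $\rho_a^*\rho_a := \exp(2\pi\sqrt{-1}\,\nabla_a E_a) - 1$, which rearranges exactly into $\sigma_p^{-1}\rho_i(\ell_p)^{-1} = 1 + \rho_a^*\rho_a$; symmetrically at $q$ one obtains $\sigma_q\rho_j(\ell_q) = 1 + \rho_a\rho_a^*$. The nontrivial point is to produce the individual maps $\rho_a$ and $\rho_a^*$ (not merely their products) in a functorial way. I expect this to be supplied by the two lemmas the introduction promises in sections~\ref{s:cqe} and \ref{s:lcd}: one should give a canonical factorization $\rho_a^*\rho_a = \exp(2\pi\sqrt{-1}\,\nabla_a E_a)-1$ of the endomorphism, and the other should reconcile the two different fibres $V_i, V_j$ (connected by a path, since $p$ and $q$ may lie on different components) so that $\rho_a$ and $\rho_a^*$ land in the right Hom-spaces and satisfy $\rho_a\rho_a^*$, $\rho_a^*\rho_a$ as stated. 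The eigenvalue condition then matches by construction: $\nabla_a E_a$ having eigenvalues in $T$ is exactly $\rho_a^*\rho_a$ having eigenvalues in $S = \{e^{2\pi\sqrt{-1}t}-1 : t\in T\}$.

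**The main obstacle I anticipate is extracting the separate factors $\rho_a$ and $\rho_a^*$ from the monodromy, and checking well-definedness and functoriality of this extraction.** The relations only pin down the composites $\rho_a^*\rho_a$ and $\rho_a\rho_a^*$ directly, so one must choose a factorization through the nonlinear map $t\mapsto e^{2\pi\sqrt{-1}t}-1$ and verify that the two choices are compatible across the two endpoints of each arrow. Once a canonical factorization lemma is in hand, functoriality and the verification that the construction is an equivalence (fully faithful plus essentially surjective, with an inverse built by taking $\nabla_a E_a$ to be the branch of $\tfrac{1}{2\pi\sqrt{-1}}\log(1+\rho_a^*\rho_a)$ with eigenvalues in $T$) should follow formally from the Riemann–Hilbert equivalence applied on each component together with the bijectivity of the exponential on the non-resonant set $T$.
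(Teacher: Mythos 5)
Your overall strategy coincides with the paper's: componentwise monodromy on $X\setminus D$, reduction to a local analysis near each arrow, and translation of the residue relations into the multiplicative relations by exponentiation. But the proposal stops exactly at the step you yourself flag as ``the nontrivial point'': you only define the \emph{product} $\rho_a^*\rho_a := e^{2\pi\sqrt{-1}\,\nabla_a E_a}-1$ and then appeal to a hoped-for ``canonical factorization lemma'' to produce $\rho_a$ and $\rho_a^*$ separately. That is the genuine gap: factoring a given endomorphism with eigenvalues in $S$ into a product of two linear maps between two different spaces is neither unique nor functorial, so this route stalls. The paper's resolution is asymmetric and avoids factorization altogether: it keeps $\rho_a = E_a$ \emph{unchanged} and puts all the correction into the other map, via the explicit power series
\[
\rho_a^* \;=\; \sum_{n=1}^\infty \frac{(2\pi\sqrt{-1})^n}{n!}\,\nabla_a (E_a\nabla_a)^{n-1},
\]
so that $\rho_a^*\rho_a = e^{2\pi\sqrt{-1}\,\nabla_a E_a}-1$ and $\rho_a\rho_a^* = e^{2\pi\sqrt{-1}\,E_a\nabla_a}-1$ hold automatically, and functoriality is immediate because the series is natural in the data. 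This is the case $m=2$ of Lemma~\ref{l:cqe} (the ``cyclic quiver exponential''); the other ingredient, Lemma~\ref{l:lcd}, replaces logarithmic connections on a disk by pairs $(V,R)$, which is how the fibres $E_p$, $E_q$ get identified with $V_1$, $V_2$ in the first place.

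A second, smaller underestimate: you assert that the equivalence property ``should follow formally'' from Riemann--Hilbert plus bijectivity of $t\mapsto e^{2\pi\sqrt{-1}\,t}-1$ on $T$. Even once the explicit series is in hand, proving that the resulting functor $\Rep^T Q_2 \to \Rep^S Q_2$ is an equivalence requires a real argument: the paper establishes essential surjectivity using the classification of indecomposable representations of the cyclic quiver (nilpotent ones, handled by the inverse logarithm series, plus those of the form $B(\sigma)$, handled via Jordan blocks), and establishes fullness by the dimension count $\dim \Hom(F(X),F(Y)) = \dim\Hom(X,Y)$, which is where non-resonance of $T$ actually enters. Bijectivity of the exponential on eigenvalues alone does not give fullness.
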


Note the following special cases: if $T=\{0\}$, the theorem gives an equivalence
$\mathcal{C}^{\{0\}}_{\Gamma,\lambda}\to \Rep_\sigma^{\{0\}} \pi(\Gamma)$;
if $T$ is a transversal to $\Z$ in $\C$ such as $\{ z\in \C : 0\le \Re z< 1\}$, it
gives an equivalence $\mathcal{C}^T_{\Gamma,\lambda}\to \Rep_\sigma \pi(\Gamma)$.
Note also that one could specify a different set $T$ for each arrow $a$ in $\Gamma$.

\begin{proof}
Deleting all arrows and marked points, monodromy gives an equivalence
from the category consisting of vector bundles on $X\setminus D$ equipped with a holomorphic connection,
to the category whose objects are a collection of representations $(\rho_i)$ of the fundamental
groups $\pi_1(X_i\setminus D_i,b_i)$, see for example \cite[Theorem 1.1]{Malg}.

We need to check that the ways of extending the vector bundles and connections to give vector bundle
representations and $\lambda$-connection systems on $\Gamma$ correspond to the ways of extending the
representations of the fundamental groups to give objects in $\Rep_\sigma^S \pi(\Gamma)$.

In fact it suffices to check this locally for each arrow $a:p\to q$ in $\Gamma$.
Thus, replacing $X$ by the union of sufficiently small disk neighbourhoods of $p$ and $q$, we may suppose
that $X$ is the disconnected union of two disks $X_1 \cup X_2$, that $p$ and $q$ are the centres of these disks,
and that $a:p\to q$ is the only arrow in $\Gamma$.

Now using Lemma~\ref{l:lcd}, we see that
$\mathcal{C}^T_{\Gamma,\lambda}$ can be considered as the category
of pairs of vector spaces $E_p$ and $E_q$
equipped with linear maps $R_p:E_p\to E_p$, $R_q:E_q \to E_q$,
$E_a:E_p\to E_q$, $\nabla_a:E_q\to E_p$ satisfying
\[
R_p - \lambda_p 1 = \nabla_a E_a
\quad\text{and}\quad
R_q - \lambda_q 1 = - E_a \nabla_a,
\]
and with $\nabla_a E_a$ having eigenvalues in $T$.

On the other hand, $\ell_p$ and $\ell_q$ freely generate the fundamental groups of $X_1\setminus \{p\}$ and $X_2\setminus\{q\}$,
so an object in $\Rep_\sigma^S \pi(\Gamma)$ is given by
vector spaces $V_1$, $V_2$, elements $\rho_1(\ell_p)\in\GL(V_1)$ and $\rho_2(\ell_q)\in\GL(V_2)$
and linear maps $\rho_a:V_1\to V_2$ and $\rho_a^*:V_2\to V_1$ satisfying
and
\[
\sigma_p^{-1} \rho_1(\ell_p)^{-1} = 1 + \rho_a^* \rho_a
\quad\text{and}\quad
\sigma_q \rho_2(\ell_q) = 1 + \rho_a \rho_a^*
\]
and such that the eigenvalues of $\rho_a\rho_a^*$ are in $S$.

The equivalence between these categories is given by the case $m=2$ of Lemma~\ref{l:cqe},
sending the object given by $E_p$, $E_q$, $R_p$, $R_q$, $E_a$, $\nabla_q$ to the object with
$V_1 = E_p$, $V_2 = E_q$, $\rho_a = E_a$ and
\[
\rho_a^* = \sum_{n=1}^\infty \frac{(2\pi \sqrt{-1})^n}{n!} \; \nabla_a (E_a \nabla_a )^{n-1}.
\]
\end{proof}

\section{Cyclic quiver exponential}
\label{s:cqe}
Let $Q_m$ be the cyclic quiver with vertices $1,\dots,m$ and arrows $a_1:m\to 1$ and $a_i:i-1\to i$ ($i=2,\dots,m$).
Given a subset $\Sigma\subseteq \C$ with $0\in \Sigma$, we denote by $\Rep^\Sigma Q_m$
the category of representations of $Q_m$
in which the path $a_m \dots a_1$ (and hence, since $0\in \Sigma$, any path of length $m$)
is represented by an endomorphism with eigenvalues in $\Sigma$.
Let $T$ be a non-resonant set with $0\in T$ and let
$S = \{ e^{2\pi \sqrt{-1} \; t} - 1 : t\in T\}$.

\begin{lem}
\label{l:cqe}
There is an equivalence $\Rep^T Q_m\to \Rep^S Q_m$ sending a representation
of $Q_m$ by vector spaces $V_i$ and linear maps $a_i:V_{i-1}\to V_i$ to the representation
with the same vector spaces, and linear maps
\[
a_1' = \sum_{n=1}^\infty \frac{(2\pi \sqrt{-1})^n}{n!} \; a_1 (a_n \dots a_1)^{n-1}
\]
and $a_i' = a_i$ for $i\neq 1$.
\end{lem}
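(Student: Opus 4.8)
The plan is to show that the stated assignment is in fact an isomorphism of categories, by exhibiting an explicit inverse constructed from a branch of the logarithm; the non-resonance of $T$ is exactly what makes this branch well defined.

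First I would rephrase the construction through a single power series. Put $g(x)=e^{2\pi\sqrt{-1}\,x}-1$ and $f(x)=g(x)/x=\sum_{n\ge1}\frac{(2\pi\sqrt{-1})^n}{n!}x^{n-1}$, which is entire with $f(0)=2\pi\sqrt{-1}$. Writing $c=a_m\cdots a_1\in\End(V_m)$ for the cyclic endomorphism, the formula defining $a_1'$ is just $a_1'=a_1\,f(c)$, so the new cyclic endomorphism is $a_m\cdots a_2\,a_1'=c\,f(c)=g(c)$. A morphism of representations intertwines every $a_i$, hence $c$ and every power series in $c$, hence $a_1'$; so the assignment is functorial. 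Moreover if $c$ has eigenvalues in $T$ then $g(c)$ has eigenvalues in $\{g(t):t\in T\}=S$, so the functor does land in $\Rep^S Q_m$.

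Next I would construct the inverse. Since $T$ is non-resonant and contains $0$, the map $t\mapsto e^{2\pi\sqrt{-1}t}$ is injective on $T$, so $g$ restricts to a bijection $T\to S$; and since $g'(x)=2\pi\sqrt{-1}\,e^{2\pi\sqrt{-1}x}$ never vanishes, $g$ is everywhere a local biholomorphism. Thus, given a representation in $\Rep^S Q_m$ with cyclic endomorphism $c'$, whose eigenvalues form a finite subset of $S$, I can pick on a union of small disjoint disks around those eigenvalues the holomorphic branch $H$ of $g^{-1}$ carrying each eigenvalue into $T$, and set $c=H(c')$ by holomorphic functional calculus. Then $g(c)=c'$ and $c$ has all eigenvalues in $T$; such a $c$ is unique, since on each generalized eigenspace any solution of $g(c)=c'$ with eigenvalues in $T$ must coincide with this local branch. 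I would then define the inverse functor by keeping the $V_i$ and the maps $a_i$ for $i\neq1$, and replacing $a_1'$ by $a_1=a_1'\,f(c)^{-1}$. Here $f(c)$ is invertible because non-resonance forces $T\cap\Z=\{0\}$, so no nonzero eigenvalue of $c$ is an integer and hence $f(t)\neq0$ for every eigenvalue $t$. The resulting cyclic endomorphism is $a_m\cdots a_1=c'\,f(c)^{-1}=c\,f(c)\,f(c)^{-1}=c$, which has eigenvalues in $T$, so this is a functor $\Rep^S Q_m\to\Rep^T Q_m$.

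It then remains to verify that the two composites are identities, which is a direct calculation: starting from $\Rep^T Q_m$ one recovers $c$ from $g(c)$ by uniqueness of the $T$-valued branch and then $a_1'\,f(c)^{-1}=a_1$, and starting from $\Rep^S Q_m$ one gets $a_1\,f(c)=a_1'$ and $g(c)=c'$. The step I expect to be the main obstacle is the functoriality of the inverse, since the branch $H$ a priori depends on the object through its eigenvalues. To handle a morphism $\phi$ between objects with cyclic endomorphisms $c'$ and $d'$, I would choose one function $H$ holomorphic on small disjoint disks about the finite combined set of eigenvalues of $c'$ and $d'$, each disk carrying the local $g^{-1}$-branch into $T$; naturality of holomorphic functional calculus then gives $\phi_m H(c')=H(d')\phi_m$, so that $\phi$ is sent to a morphism. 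The same local-disk device also shows the inverse is independent of the choices made.
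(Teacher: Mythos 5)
Your proof is correct, but it takes a genuinely different route from the paper's. The paper argues representation-theoretically: the exponential functor $F$ acts as the identity on morphisms and has the logarithm series as an inverse on nilpotent representations; then, invoking the classification of indecomposable representations of the cyclic quiver (nilpotents together with $B(\sigma)$ for $\sigma$ an automorphism) and the Jordan-block computation $F(B(J_n(\lambda)))\cong B(J_n(e^{2\pi\sqrt{-1}\,\lambda}-1))$, it concludes that $F$ hits every isomorphism class of $\Rep^S Q_m$, and it deduces fullness from faithfulness together with the equality $\dim\Hom(F(X),F(Y))=\dim\Hom(X,Y)$, which is where non-resonance enters. You instead build a strict inverse functor on all of $\Rep^S Q_m$ at once, applying a branch $H$ of $g^{-1}$ (with $g(x)=e^{2\pi\sqrt{-1}\,x}-1$) to the cyclic endomorphism via the holomorphic functional calculus; non-resonance makes the $T$-valued branch well defined and unique, and naturality of the functional calculus gives functoriality of the inverse, which is precisely the fullness statement that the paper obtains by dimension counting. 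Your route buys a stronger conclusion (an isomorphism of categories, not merely an equivalence) and avoids both Krull--Schmidt and the classification of indecomposables of $Q_m$; the paper's route stays within formal power series and standard quiver representation theory, at the cost of a case analysis and a Hom-dimension claim whose verification is left to the reader. The one step you should make fully explicit is the uniqueness of $c$ with $g(c)=c'$ and eigenvalues in $T$: your generalized-eigenspace sketch does work, since injectivity of $g$ on $T$ forces the generalized eigenspace decomposition of any such $c$ to coincide with that of $c'$, and on each block $H(g(c))=c$ by the composition rule for the functional calculus; this uniqueness is what guarantees both that the composite $G\circ F$ is the identity and that your inverse is independent of the choice of disks.
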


\begin{proof}
The functor $F$ given by this construction, and which acts as the identity on morphisms,
defines an equivalence from the category of nilpotent representations to itself,
as an inverse is given by the analogous logarithm functor, with
\[
a_1'' = \frac{1}{2\pi\sqrt{-1}} \sum_{n=1}^\infty \frac{(-1)^{n-1}}{n} \; a_1 (a_n \dots a_1)^{n-1}.
\]
Given an endomorphism $\sigma$ of a vector space $V$, let $B(\sigma)$ be the representation
of $Q_m$ in which all $V_i=V$, $a_1=\sigma$ and $a_i=1$ for $i\neq 1$.
Recall that any indecomposable representation of $Q_m$ is either nilpotent or
isomorphic to $B(\sigma)$ for some indecomposable automorphism $\sigma$.
Now if $\sigma$ is given by a Jordan block $J_n(\lambda)$, then
\[
F(B(\sigma)) = B(e^{2\pi\sqrt{-1}\;\sigma}-1) \cong B(J_n(e^{2\pi\sqrt{-1}\; \lambda}-1)).
\]
It follows that $F$ induces a surjective map onto the isomorphism classes of $\Rep^S Q_m$.
It is a faithful functor, and also full since the non-resonance of $T$ ensures that
\[
\dim \Hom(F(X),F(Y)) = \dim \Hom(X,Y)
\]
for any $X,Y$ nilpotent or of the form $B(\sigma)$.
\end{proof}

\section{Logarithmic connections for a disk}
\label{s:lcd}
The result in this section is well known, but we state it carefully
as the literature can be confusing.

Let $X$ an open unit disk in $\C$ with centre $p$, let $D = \{p\}$,
and let $\Sigma$ be a non-resonant subset of $\C$.
Let $\mathcal{C}^\Sigma$ be the category of pairs $(E,\nabla)$
consisting of a vector bundle $E$ on $X$ and a logarithmic connection
$\nabla:E\to\Omega^1_X(\log D)\otimes E$, such that the eigenvalues of $\Res_p\nabla$ are in $\Sigma$.
A representation of the polynomial ring $\C[x]$ can be thought of as a pair $(V,R)$ where $V$ is a vector
space and $R\in \End(V)$. We denote by $\Rep^\Sigma \C[x]$ be the category of representations such that
$R$ has eigenvalues in $\Sigma$.
Given $(V,R)$, one obtains a trivial vector bundle $E = \OO\otimes_C V$ and a
logarithmic connection $\nabla_R : E \to \Omega^1_X(\log D)\otimes E$ defined by
\[
\nabla_R(f) = df + \frac{R}{z-p} \cdot f \, dz
\]
for $f$ a section of $E$. It has residue $\Res_p \nabla_R = R$.

\begin{lem}
\label{l:lcd}
The functor $F(V,R) = (\OO\otimes V,\nabla_R)$ induces
an equivalence $\Rep^\Sigma \C[x]\to \mathcal{C}^\Sigma$.
\end{lem}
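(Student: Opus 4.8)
The plan is to establish the equivalence by verifying that the functor $F$ is essentially surjective, full, and faithful, with the main subtlety concentrated in the classification of logarithmic connections on the disk up to the given non-resonance hypothesis. First I would describe the target more explicitly. A logarithmic connection $\nabla$ on a vector bundle $E$ over the disk $X$ has, after choosing a coordinate $z$ centred at $p$, a local expression $\nabla(f) = df + A(z)\cdot f\,dz$ where $A(z)$ is a matrix-valued function holomorphic except for a simple pole at $p$, so that $A(z) = R/(z-p) + (\text{holomorphic})$ with $R = \Res_p\nabla$. The key structural fact I would invoke is that, since $X$ is a disk (hence Stein and contractible), every vector bundle on $X$ is trivial; this lets me assume $E = \OO\otimes V$ from the outset and reduces the problem to one about matrix-valued connection forms.

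The heart of the argument is a gauge-theoretic normalization: given any logarithmic connection with residue $R$ having eigenvalues in the non-resonant set $\Sigma$, I would show it is gauge-equivalent (by a holomorphic change of trivialization fixing the fibre at $p$) to the model connection $\nabla_R$ whose connection form is exactly $R/(z-p)$. This is the standard theory of Fuchsian singularities: one seeks a holomorphic gauge transformation $g(z)$ with $g(p) = 1$ transforming $A(z)$ into $R/(z-p)$, which amounts to solving a recursion for the Taylor coefficients of $g$; the coefficient equations take the form $(R - n)g_n - g_n R = (\text{known lower-order terms})$ for $n \ge 1$, and the operator $X \mapsto (R-n)X - XR$ on $\End(V)$ is invertible precisely when no eigenvalue of $R$ differs from another by the positive integer $n$. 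Here is exactly where non-resonance of $\Sigma$ is used: it guarantees these operators are all invertible, so the recursion solves uniquely and the resulting power series converges on the disk. This gives essential surjectivity and, applied to morphisms, the structure needed for fullness.

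With the normal form in hand I would finish by matching morphisms. A morphism in $\mathcal{C}^\Sigma$ from $(\OO\otimes V, \nabla_R)$ to $(\OO\otimes W, \nabla_{R'})$ is a vector bundle map $\theta\colon \OO\otimes V \to \OO\otimes W$, i.e.\ a holomorphic $\Hom(V,W)$-valued function $\theta(z)$, intertwining the connections. Writing out the intertwining condition $\nabla_{R'}\circ\theta = (1\otimes\theta)\circ\nabla_R$ yields the differential equation $\theta'(z) + (R'/(z-p))\theta(z) - \theta(z)(R/(z-p)) = 0$, and expanding $\theta$ as a power series gives $n\theta_n = R'\theta_n - \theta_n R$ wait more precisely the recursion $(R' - n)\theta_n = \theta_n R$ no—I would expand carefully to get, for each $n$, an equation of the form $((R' - n)\theta_n - \theta_n R) = 0$ forcing $\theta_n = 0$ for $n \ge 1$ by the same non-resonant invertibility, while $\theta_0$ satisfies $R'\theta_0 = \theta_0 R$. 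Thus $\theta$ is constant and commutes with the residues, which is exactly a morphism $(V,R)\to(W,R')$ in $\Rep^\Sigma\C[x]$; conversely any such constant map gives a flat bundle map. This shows $F$ is fully faithful on the model objects, and combined with essential surjectivity completes the proof.

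The step I expect to be the main obstacle is the gauge normalization and its convergence: one must argue not only that the formal power series solving the recursion exists (which is pure non-resonant linear algebra) but that it converges on the whole disk, so that the gauge transformation is genuinely holomorphic and invertible there. This is the classical local theory of Fuchsian systems and I would cite the standard reference, but it is the one analytic point that does not reduce to formal manipulation; everything else is bookkeeping with the recursion operators $X\mapsto (R-n)X - XR$ and their invertibility under non-resonance.
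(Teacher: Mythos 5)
Your proof is correct, but it takes a genuinely different route from the paper. The paper does not touch the local analytic theory at all: it invokes the monodromy equivalence $M:\mathcal{C}^\Sigma\to\Rep^H\pi_1(X\setminus D)$ from \cite[Theorem 6.1]{CBipb} (essentially Deligne's Riemann--Hilbert correspondence on the disk, with the eigenvalue condition built in), observes that the composite $MF$ sends $(V,R)$ to the representation where the loop acts by $e^{-2\pi\sqrt{-1}\,R}$, and concludes that $MF$ is an equivalence by the matrix-exponential argument already established as the $m=1$ case of Lemma~\ref{l:cqe}; then $F$ is an equivalence by two-out-of-three. Your argument instead re-proves the needed piece of that local theory directly: triviality of bundles on the disk, holomorphic gauge normalization of a non-resonant logarithmic connection to its residue model $\nabla_R$, and a power-series computation showing intertwiners between model connections are constant and commute appropriately with the residues. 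What the paper's route buys is brevity and reuse (the analytic content is outsourced to a cited theorem, and the combinatorial content to a lemma it needs anyway); what your route buys is self-containedness and transparency about exactly where non-resonance enters, namely the invertibility of the operators $X\mapsto (R\pm n)X - XR$. Two small points of care in your write-up, neither fatal: your recursion operators carry a sign ambiguity (the morphism recursion is $(R'+n)\theta_n = \theta_n R$, not $(R'-n)\theta_n = \theta_n R$), which is harmless since non-resonance rules out differences equal to $\pm n$ symmetrically; and the convergence of the normalizing gauge transformation on the \emph{whole} disk (not merely near $p$) does hold, but you should either cite the classical Fuchs--Frobenius statement that such series converge up to the nearest other singularity, or note that the germ of $g$ at $p$ extends over the punctured disk by analytic continuation of solutions of the linear ODE it satisfies, with trivial monodromy because the germ is single-valued near $p$.
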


\begin{proof}
By \cite[Theorem 6.1]{CBipb}, monodromy gives an equivalence $M$ from $\mathcal{C}^\Sigma$ to $\Rep^H \pi_1(X\setminus D)$,
the category of representations of the fundamental group of $X\setminus D$ in which the loop around $p$ is given by
a matrix with eigenvalues in $H = \{ e^{-2\pi \sqrt{-1} \; \sigma} : \sigma\in \Sigma\}$.

The composition $MF : \Rep^\Sigma \C[x]\to \Rep^H \pi_1(X\setminus D)$ sends $(V,R)$ to the representation with
vector space $V$ and in which the loop around $p$ is given by $e^{-2\pi \sqrt{-1}\; R}$,
so by properties of the matrix exponential (as in the case $m=1$ of Lemma~\ref{l:cqe}) it is an equivalence.
It follows that $F$ is an equivalence.
\end{proof}

\section{Multiplicative preprojective algebras}
\label{s:mpa}
In case the Riemann surface quiver $\Gamma$ is compact,
we show that the target of the monodromy functor can be identified with
a category of representations of a multiplicative preprojective algebra \cite{CBH}.

Let $Q$ be the quiver obtained from the component quiver $[\Gamma]$ of $\Gamma$
by adjoining $g_i$ loops $\tau_i^j$ ($1 \le j \le g_i$) at each vertex $i$, where $g_i$ is the genus of $X_i$.
Define $q = (q_i)\in (\C^\times)^I$ by $q_i = e^{2\pi \sqrt{-1} \; \lambda_i}$ for $i\in I$,
and let $\Lambda^q(Q)$ be the corresponding multiplicative preprojective algebra.
We denote by $\Rep \Lambda^q(Q)'$  the category of representations of $\Lambda^q(Q)$ in which the linear maps
representing the loops $\tau_i^j$ are invertible. It can also be defined as the category
of representations of a universal localization of $\Lambda^q(Q)$.

\begin{prop}
\label{p:mpa}
$\Rep_\sigma \pi(\Gamma)$ is equivalent to $\Rep \Lambda^q(Q)'$.
\end{prop}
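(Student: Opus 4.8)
The plan is to unwind both categories into explicit generators-and-relations data attached to each component $i\in I$ and to check that the two descriptions coincide term by term. First I would record the standard presentation of the fundamental group of the genus-$g_i$ surface $X_i$ with the points of $D_i$ removed: it is generated by elements $x_1,y_1,\dots,x_{g_i},y_{g_i}$ together with one loop $\ell_p$ for each $p\in D_i$, subject to the single relation $\prod_{k=1}^{g_i}[x_k,y_k]\prod_{p\in D_i}\ell_p=1$ for a fixed cyclic order. Thus giving a representation $\rho_i$ of $\pi_1(X_i\setminus D_i,b_i)$ amounts to giving invertible matrices for the $x_k,y_k,\ell_p$ satisfying this one relation, and an object of $\Rep_\sigma\pi(\Gamma)$ is this data for all $i$ together with the arrow maps $\rho_a,\rho_a^*$ satisfying the two defining relations.

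Next I would eliminate the puncture monodromies. Since $\Gamma$ has non-interfering arrows, each $p\in D_i$ is the tail or head of a unique arrow $a$, and the defining relations of $\Rep_\sigma\pi(\Gamma)$ solve $\rho_i(\ell_p)=\sigma_p^{-1}(1+\rho_a^*\rho_a)^{-1}$ when $p=t(a)$ and $\rho_j(\ell_q)=\sigma_q^{-1}(1+\rho_a\rho_a^*)$ when $q=h(a)$, both of which are forced to be invertible. Hence specifying all the puncture monodromies, across all components at once, is the same as specifying the arrow maps $(\rho_a,\rho_a^*)$, which I identify with the maps on the arrows $[a]$ and $[a]^*$ of the double $\bar Q$ of $Q$.

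The crux is the genus contribution. For each loop $\tau_i^k$ of $Q$, with double $(\tau_i^k)^*$, I set $\rho_i(x_k)=\tau_i^k$ and $\rho_i(y_k)=1+(\tau_i^k)^*\tau_i^k$. Using the identity $\tau(1+\tau^*\tau)=(1+\tau\tau^*)\tau$ one computes $[\rho_i(x_k),\rho_i(y_k)]=(1+\tau_i^k(\tau_i^k)^*)(1+(\tau_i^k)^*\tau_i^k)^{-1}$, which is exactly the factor attached to the loop in the multiplicative preprojective relation of \cite{CBS}. The assignment $(\tau,\tau^*)\mapsto(\tau,\,1+\tau^*\tau)$ is a bijection onto pairs of invertible matrices, with inverse $(X,Y)\mapsto(X,\,(Y-1)X^{-1})$; in particular it forces $\tau_i^k=\rho_i(x_k)$ to be invertible, which is precisely the condition cutting out $\Rep\Lambda^q(Q)'$.

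Finally I would substitute these formulas into the surface relation at each $i$. Collecting the scalars gives $\prod_{p\in D_i}\sigma_p^{-1}=q_i^{-1}$, since $\lambda_i=\sum_{p\in D_i}\lambda_p$ and $q_i=e^{2\pi\sqrt{-1}\,\lambda_i}$, so the relation $\prod_k[x_k,y_k]\prod_p\ell_p=1$ becomes $\prod_k(1+\tau_i^k(\tau_i^k)^*)(1+(\tau_i^k)^*\tau_i^k)^{-1}$ times the factors $(1+[a][a]^*)$ and $(1+[a]^*[a])^{-1}$ coming from the arrows, set equal to $q_i$ — exactly the multiplicative preprojective relation at $i$. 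Functoriality in each $V_i$ is immediate, as every formula is natural, so the two assignments are mutually inverse and yield the desired equivalence. I expect the main obstacle to be purely combinatorial bookkeeping of orderings: the cyclic order of the generators in the surface-group relation must be matched against the order of the factors in the multiplicative preprojective relation, and one must handle with care the arrows $[a]$ of $[\Gamma]$ that are loops, where head and tail lie in the same component so that both $\ell_{t(a)}$ and $\ell_{h(a)}$ are eliminated using the same pair $(\rho_a,\rho_a^*)$. These orderings can be chosen compatibly, and the determinant/invertibility constraints are automatically consistent.
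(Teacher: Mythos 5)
Your proposal is correct and follows essentially the same route as the paper: present $\pi_1(X_i\setminus D_i)$ by genus generators and one puncture loop per marked point, eliminate the puncture loops via the defining relations of $\Rep_\sigma \pi(\Gamma)$, convert each genus commutator into a factor $(1+\tau\tau^*)(1+\tau^*\tau)^{-1}$ by a change of variables, and collect the scalars $\sigma_p$ into $q_i$. The only (immaterial) difference is your substitution $\rho_i(y_k)=1+(\tau_i^k)^*\tau_i^k$, where the paper uses $\rho(v_j)=e_j^{-1}+e_j^*$; both give the same commutator and the same matching of invertibility conditions with the definition of $\Rep\Lambda^q(Q)'$.
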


\begin{proof}
If $a_1,\dots,a_r$ are the arrows with head in $D_i$ and $b_1,\dots,b_s$ are the arrows with tail in $D_i$,
then $\pi(X_i\setminus D_i)$ can be generated by elements
$u_1$, \dots, $u_{g_i}$, $v_1$, \dots, $v_{g_i}$, $\ell_1$, \dots, $\ell_{r+s}$, subject to the relation
\[
u_1 v_1 u_1^{-1} v_1^{-1} \cdots
u_{g_i} v_{g_i} u_{g_i}^{-1} v_{g_i}^{-1}
\ell_1 \cdots \ell_{r+s} = 1
\]
where $\ell_1,\dots,\ell_r$ are loops in $X_i\setminus D_i$ around the heads of $a_1,\dots,a_r$ and
$\ell_{r+1},\dots,\ell_{r+s}$ are loops around the tails of $b_1,\dots,b_s$.

A representation $\rho:\pi(X_i\setminus D_i)\to\GL(V_i)$ is determined by
automorphisms $\rho(u_j)$, $\rho(v_j)$, $\rho(\ell_j)$
of $V_i$ satisfying the same relation.
Rewriting $\rho(u_j)$ and $\rho(v_j)$ in terms of endomorphisms $e_j$ and $e_j^*$ of $V_i$ via
\[
\rho(u_j) = e_j
\quad
\text{and}
\quad
\rho(v_j) = e_j^{-1} + e_j^*,
\]
and using the equations
\[
\sigma_p^{-1} \rho_i(\ell_p)^{-1} =  1_{V_i} + \rho_a^* \rho_a
\quad\text{and}\quad
\sigma_q \rho_j(\ell_q) = 1_{V_j} + \rho_a \rho_a^*,
\]
the relation becomes
\begin{multline*}
(1+ e_1 e_1^*)
(1+ e_1^* e_1)^{-1}
\cdots
(1+ e_{g_i} e_{g_i}^*)
(1+ e_{g_i}^* e_{g_i})^{-1}
\cdot
\\
\cdot
(1 + \rho_{a_1}\rho_{a_1}^*)
\cdots
(1 + \rho_{a_r}\rho_{a_r}^*)
(1 + \rho_{b_1}^*\rho_{b_1})^{-1}
\cdots
(1 + \rho_{b_s}^*\rho_{b_s})^{-1}
= q_i 1
\end{multline*}
This is the relation at vertex $i$ for the multiplicative preprojective algebra $\Lambda^q(Q)$,
where $e_j$ and $e_j^*$ represent $\tau_i^j$ and $\tau_i^{j*}$.
Observe that the the requirement for the multiplicative preprojective algebra
that all terms in this product be invertible,
coupled with the requirement that the $\tau_i^j$ be represented by invertible linear maps,
corresponds to the fact that the generators of the fundamental group
are given in the representation $\rho$ by invertible linear maps.
\end{proof}

\section{Corollaries}
\label{s:cor}
We obtain the following answer to a question of Shaw \cite[\S 5.2]{Shaw}.
Note that if one works not over $\C$, but over a base field of characteristic 2, then the assertion is false, see \cite[Lemma 5.5.1]{Shaw}

\begin{cor}
If $Q$ is a quiver of Dynkin type $ADE$, then $\Lambda^1(Q)$ is isomorphic to the usual preprojective algebra $\Pi(Q)$.
\end{cor}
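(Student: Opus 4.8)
The plan is to realize the isomorphism $\Lambda^1(Q)\cong\Pi(Q)$ as a shadow of the monodromy equivalence from Theorem~\ref{t:monthm}, applied to a carefully chosen Riemann surface quiver of $\PP^1$-type. Since $Q$ is Dynkin, we take each connected component of $X$ to be a copy of $\PP^1$, one for each vertex of $Q$, so that $[\Gamma]=Q$ and all genera $g_i=0$. We choose the scalars $\lambda_p$ so that each $\lambda_i=\sum_{p\in D_i}\lambda_p=0$, giving $q_i=e^{2\pi\sqrt{-1}\,\lambda_i}=1$ for all $i$; thus the multiplicative preprojective algebra appearing in Proposition~\ref{p:mpa} is exactly $\Lambda^1(Q)=\Lambda^q(Q)$ with $g_i=0$, so that $Q$ itself is the adjoined quiver. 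Simultaneously, with all $\lambda_i=0$, Proposition~\ref{p:dpa} identifies the vb-trivial objects of $\mathcal C_{\Gamma,\lambda}$ with representations of the ordinary (deformed, but with zero deformation) preprojective algebra $\Pi^0(Q)=\Pi(Q)$.

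The key chain of identifications then runs as follows. First I would specialize $T=\{0\}$, so that $S=\{e^{0}-1\}=\{0\}$ as well; then having eigenvalues in $T$ means $\nabla_aE_a$ is nilpotent, and having eigenvalues in $S$ means $\rho_a\rho_a^*$ is nilpotent. With this choice, Theorem~\ref{t:monthm} gives an equivalence $\mathcal C^{\{0\}}_{\Gamma,\lambda}\to\Rep_\sigma^{\{0\}}\pi(\Gamma)$, and Proposition~\ref{p:mpa} identifies the latter with a nilpotent-type full subcategory of $\Rep\Lambda^1(Q)'$. The crucial Dynkin input enters twice. On the $\Pi(Q)$ side, because $Q$ is Dynkin, every finite-dimensional representation of $\Pi(Q)$ is automatically nilpotent, so the nilpotency condition from $T=\{0\}$ imposes nothing and $\mathcal C^{\{0\}}_{\Gamma,\lambda}$ with vb-trivial bundles is all of $\Rep\Pi(Q)$. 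Symmetrically, on the multiplicative side the finite-dimensionality forces the relevant products to be nilpotent as well, so $\Rep_\sigma^{\{0\}}\pi(\Gamma)$ matches all finite-dimensional representations of $\Lambda^1(Q)'$. Composing the equivalences yields an equivalence between $\Rep\Pi(Q)$ and $\Rep\Lambda^1(Q)'$, and I would then upgrade this equivalence of module categories to an algebra isomorphism, for instance by checking it respects dimension vectors and is compatible with the idempotents $e_i$, so that the two algebras have isomorphic categories of finite-dimensional modules over the same semisimple base, hence are isomorphic as algebras (both being finite-dimensional since $Q$ is Dynkin).

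There is, however, a gap to bridge between the \emph{vb-trivial} subcategory, on which Proposition~\ref{p:dpa} operates, and the \emph{full} category $\mathcal C^{\{0\}}_{\Gamma,\lambda}$, which is the source of the monodromy equivalence. I would argue that over $\PP^1$, the Lifting Theorem (Theorem~\ref{t:lift}) together with the vanishing $\lambda_i=0$ forces the relevant bundles to be trivial: the degree condition $\deg E'+\sum_i\lambda_i\rank E'_i=\deg E'=0$ for every summand, combined with the existence of a connection with prescribed nilpotent residues, pins down the splitting type so that $E_i$ must be $\OO_{\PP^1}^{\oplus r_i}$ (any nontrivial summand $\OO(d)$ with $d\neq0$ would violate degree balance, and a logarithmic connection with nilpotent residues and total degree zero cannot live on a bundle with nonzero summands). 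This is the step I expect to be the main obstacle: one must verify carefully that, in the $\PP^1$-type, all-$\lambda_i=0$ setting, every object of $\mathcal C^{\{0\}}_{\Gamma,\lambda}$ is automatically vb-trivial, so that the source of the monodromy functor genuinely coincides with $\Rep\Pi(Q)$ rather than a larger category. Once that triviality is established, the remaining steps are the bookkeeping of the two equivalences and the passage from an equivalence of categories to an isomorphism of finite-dimensional algebras.
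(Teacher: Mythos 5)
Your overall strategy coincides with the paper's: run the monodromy functor of Theorem~\ref{t:monthm} over a $\PP^1$-type $\Gamma$ with $[\Gamma]=Q$ and $\lambda=0$, identify the vb-trivial objects with $\Rep\Pi(Q)$ via Proposition~\ref{p:dpa} and the target with $\Rep\Lambda^1(Q)$ via Proposition~\ref{p:mpa}, and finally upgrade an equivalence of finite-dimensional module categories to an algebra isomorphism. You also correctly locate the crux, namely that every object of $\mathcal{C}^T_{\Gamma,0}$ must be shown to be vb-trivial. But the argument you sketch for that step fails, for two concrete reasons. First, the Lifting Theorem (Theorem~\ref{t:lift}) constrains only direct summands of $\mathbf{E}$ \emph{as a vector bundle representation}; an object that is indecomposable as a representation can perfectly well have $E_i\cong\OO(1)\oplus\OO(-1)$ on some component, with neither line bundle a summand of $\mathbf{E}$, so ``degree balance'' rules out nothing. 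Second, your claim that a degree-zero bundle with nonzero Birkhoff--Grothendieck summands cannot carry a logarithmic connection with nilpotent residues is false: on $\PP^1$ with four or more marked points, Mihai's criterion (the one quoted in the proof of Theorem~\ref{t:lift}) shows that $\OO(1)\oplus\OO(-1)$ does admit such a connection --- with three points the trace conditions force triviality, but with four the constraint $\sum_p(\zeta_p)_{21}s(p)=0$ for $s\in H^0(\OO(2))$ has nonzero solutions and the remaining conditions can be met. This is exactly the subtlety underlying the Riemann--Hilbert problem. Indeed no argument of this purely numerical kind can succeed: the degree and nilpotency conditions are identical for Dynkin and non-Dynkin component quivers, whereas the conclusion genuinely requires $Q$ to be Dynkin.

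The missing idea, which is how the paper bridges the gap, is to bootstrap through the monodromy equivalence itself using \cite[Lemma 5.1]{CBS}: the simple representations of $\Lambda^1(Q)$ are one-dimensional at a vertex, so the corresponding simple objects of $\mathcal{C}^T_{\Gamma,0}$ are single line bundles with a holomorphic connection, hence of degree $0$, hence trivial since $\Gamma$ is of $\PP^1$-type; then, since $\Lambda^1(Q)$ is finite-dimensional, every object is an iterated extension of these simples, and since the trivial bundle on $\PP^1$ has no self-extensions ($H^1(\PP^1,\OO)=0$), iterated extensions of vb-trivial objects are vb-trivial. Thus the Dynkin hypothesis enters through the module theory of $\Lambda^1(Q)$, not through degree arithmetic. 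The same lemma is also what your proposal silently needs at a second point: your assertion that finite-dimensionality of $\Lambda^1(Q)$ ``forces the relevant products $\rho_a\rho_a^*$ to be nilpotent'' does not follow from finite-dimensionality alone (finite-dimensional algebras have plenty of non-nilpotent elements); it follows only once one knows all simples are one-dimensional at a vertex, so that the arrows act nilpotently on every finite-dimensional module. The paper avoids this issue entirely by taking $T$ to be a transversal to $\Z$ rather than $\{0\}$, so that the target of the monodromy functor is all of $\Rep_\sigma\pi(\Gamma)\simeq\Rep\Lambda^1(Q)$, and the eigenvalue condition needs to be checked only on the $\Pi(Q)$ side, where nilpotency is automatic for Dynkin $Q$.
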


\begin{proof}
Let $\Gamma$ be a Riemann surface quiver of $\PP^1$-type whose component quiver is $Q$.
The Monodromy Theorem gives an equivalence
$\mathcal{C}^T_{\Gamma,0} \to \Rep \Lambda^q(Q)$
for a suitable transversal $T$.

By \cite[Lemma 5.1]{CBS}, the simple representations for $\Lambda^1(Q)$ are 1-dimensional at a vertex.
Thus they come from vector bundle representations and $0$-connection systems which consist of a
single line bundle equipped with a holomorphic connection.
Moreover the line bundles have degree 0 (for example by the Lifting Theorem), so,
since $\Gamma$ is of $\PP^1$-type, they are trivial.

Now any representation of $\Lambda^1(Q)$ is an iterated extension of simple representations,
so any object of $\mathcal{C}^T_{\Gamma,0}$ is an iterated extension of trivial line bundle systems.
Since the trivial line bundle on $\PP^1$ has no self-extensions, it follows that
any object of $\mathcal{C}^T_{\Gamma,0}$ is vb-trivial.
Thus by Proposition~\ref{p:dpa}, the objects of $\mathcal{C}^T_{\Gamma,0}$
can be considered as representations of $\Pi(Q)$.
Now as $Q$ is Dynkin, $\Pi(Q)$ is finite-dimensional and any representation is nilpotent.
It follows that $\mathcal{C}^T_{\Gamma,0}$ is equivalent to $\Rep \Pi(Q)$.

Thus the Monodromy Theorem gives an equivalence $\Rep \Pi(Q)\to \Rep \Lambda^1(Q)$.
Here we have only considered finite-dimensional representations,
but $\Lambda^1(Q)$ is finite-dimensional by \cite[3.1.1]{Shaw}, so this equivalence is a Morita equivalence.
Finally, as the simple representations are 1-dimensional for both algebras, the algebras are isomorphic.
\end{proof}

This gives the following simple case of Hilbert's 21st problem (which is presumably already known).

\begin{cor}
Given $n_1,n_2,n_3$ with
$1/n_1 + 1/n_2 + 1/n_3 > 1$ and $D=\{a_1,a_2,a_3\} \subset \PP^1$,
any representation
$\rho : \pi_1(\PP^1 \setminus D) \to \GL_n(\C)$
sending loops around the $a_i$ to
unipotent matrices $\rho_i$ with \makebox{$(\rho_i-1)^{n_i}=0$}
arises as the monodromy of a logarithmic connection on a trivial vector bundle (so a Fuchsian system).
\end{cor}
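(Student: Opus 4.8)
The plan is to reduce the statement to the preceding Corollary by encoding $\rho$ as an object of $\Rep_\sigma \pi(\Gamma)$ for a carefully chosen star-shaped Riemann surface quiver $\Gamma$ of $\PP^1$-type. First I would take $Q$ to be the star with central vertex $0$ and three arms of lengths $n_1-1$, $n_2-1$ and $n_3-1$. The hypothesis $1/n_1 + 1/n_2 + 1/n_3 > 1$ is precisely the condition that $Q$ is a Dynkin diagram of type $ADE$, so that the preceding Corollary applies to it. I would then let $\Gamma$ be a Riemann surface quiver of $\PP^1$-type with non-interfering arrows and component quiver $Q$, arranged so that the central component is a copy of $\PP^1$ carrying $a_1,a_2,a_3$ as the heads of the three arrows coming in from the arms. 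Taking $\lambda=0$, so that $\sigma=1$ and $q=1$, Proposition~\ref{p:mpa} identifies $\Rep_\sigma \pi(\Gamma)$ with $\Rep \Lambda^1(Q)$, with no loops adjoined since every component has genus $0$.

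The core of the argument is to realize $\rho$ as the central datum of an object of $\Rep_\sigma \pi(\Gamma)$. On the central component I take the fundamental group representation to be $\rho$ itself, so that the monodromy around $a_i$ is $\rho_i$; the vertex relation of Proposition~\ref{p:mpa} at $0$ then reads $\rho_1\rho_2\rho_3 = 1$, which holds by hypothesis, and the relation attached to the arrow $a_i$ forces $1 + \rho_{a_i}\rho_{a_i}^* = \rho_i$, that is $\rho_{a_i}\rho_{a_i}^* = \rho_i - 1 =: \xi_i$. Since $\rho_i$ is unipotent with $(\rho_i-1)^{n_i}=0$, the endomorphism $\xi_i$ is nilpotent with $\xi_i^{n_i}=0$. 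For each arm I would use the standard resolution of a nilpotent endomorphism along a type $A$ leg: take the arm spaces to be the images $\Ima(\xi_i^{\,j})$ for $1\le j\le n_i-1$, with the arm maps given by the inclusions $\Ima(\xi_i^{\,j+1})\hookrightarrow \Ima(\xi_i^{\,j})$ and by the maps induced by $\xi_i$, so that the composite along the innermost arrow recovers $\xi_i$. Because $\xi_i^{n_i}=0$, an arm of length $n_i-1$ is exactly long enough to carry this data, and because every endomorphism occurring is nilpotent, each factor $1 + \rho_a\rho_a^*$ is unipotent, hence invertible, and the multiplicative relations along the arm collapse to the ordinary additive preprojective relations, which the resolution satisfies by construction.

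Assembling these data yields an object of $\Rep_\sigma \pi(\Gamma)$ whose restriction to the central component is $\rho$. Applying the inverse of the Monodromy Theorem (Theorem~\ref{t:monthm}) produces an object $(\mathbf{E},\pmb{\nabla})$ of $\mathcal{C}^T_{\Gamma,0}$, for a suitable transversal $T$, whose connection on the central component has monodromy $\rho$. By the argument of the preceding Corollary, since $Q$ is Dynkin every object of $\mathcal{C}^T_{\Gamma,0}$ is vb-trivial, so in particular the bundle on the central $\PP^1$ is trivial. The connection on this trivial bundle is holomorphic away from $a_1,a_2,a_3$, and hence in the standard coordinate is a Fuchsian system whose residues are nilpotent and sum to zero; by construction its monodromy is $\rho$, as required.

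I expect the main obstacle to be the middle step: checking that the arm resolutions, which are purely linear-algebraic, genuinely assemble with the global representation $\rho$ into a single well-defined object of $\Rep_\sigma \pi(\Gamma)$ satisfying all the vertex relations simultaneously, and in particular that the monodromies assigned to the interior arm components are consistent with the relations of Proposition~\ref{p:mpa} at every vertex. The collapse of the multiplicative relations to additive ones in the nilpotent case makes this tractable, but the bookkeeping of orientations and of the representations on the arm components is where care will be needed.
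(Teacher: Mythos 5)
Your proposal is correct, and its skeleton matches the paper's: reduce to the preceding Corollary via the star-shaped Dynkin quiver $Q$ with arms of lengths $n_i-1$, and extract the connection on the central $\PP^1$. The difference lies in the middle. The paper passes from $\rho$ to a representation of $\Lambda^1(Q)$ by citing the correspondence of \cite[\S 8]{CBS}, then uses the isomorphism $\Lambda^1(Q)\cong\Pi(Q)$ from the preceding Corollary to get a representation of $\Pi(Q)$, which Proposition~\ref{p:dpa} interprets directly as a vb-trivial object of $\mathcal{C}^T_{\Gamma,0}$. You instead build the object of $\Rep_\sigma\pi(\Gamma)\simeq\Rep\Lambda^1(Q)'$ by hand---$\rho$ on the central component, the resolutions $\Ima(\xi_i^{\,j})$ along the arms, which is exactly the linear algebra underlying the CBS correspondence---and then apply the inverse of the monodromy equivalence of Theorem~\ref{t:monthm}, invoking the vb-triviality argument of the preceding proof to see that the central bundle is trivial. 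Your route has the small advantage that the identification of the monodromy of the resulting connection with $\rho$ is automatic, since the object was constructed so that its central datum is $\rho$; the paper's route through $\Pi(Q)$ implicitly relies on the fact that the isomorphism $\Lambda^1(Q)\cong\Pi(Q)$ was itself defined by the monodromy equivalence, and it buys brevity by outsourcing the arm construction to \cite{CBS}. The detail you flagged as a worry---consistency of the arm data with the relations of Proposition~\ref{p:mpa} at the interior arm vertices---does check out: each interior arm component is $\PP^1$ minus two points, whose fundamental group forces the two loop monodromies to be mutually inverse, so the relation there reads $1+\rho_{a_{j+1}}\rho_{a_{j+1}}^*=1+\rho_{a_j}^*\rho_{a_j}$, which your maps satisfy (both sides are $v\mapsto\xi_i v$ on $\Ima(\xi_i^{\,j})$); at the end of an arm, the trivial fundamental group of $\PP^1$ minus one point forces $\rho_a^*\rho_a=0$, which holds precisely because $\xi_i^{\,n_i}=0$, so the arm length $n_i-1$ is exactly right.
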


\begin{proof}
We may assume that $\rho_1\rho_2\rho_3 = 1$.
The representation corresponds, as in \cite[\S 8]{CBS}, to a representation of
$\Lambda^1(Q)$ for some star-shaped quiver $Q$.
The condition on $n_i$ ensures that $Q$ is of Dynkin type.
By the previous result it arises from some representation of $\Pi(Q)$, which can be
interpreted as a vb-trivial object in $\mathcal{C}^T_{\Gamma,0}$ as in the previous proof. The connection
on the trivial vector bundle at the central vertex of $Q$ is the required Fuchsian system.
\end{proof}

\end{document}